\newtheorem*{thm}{Theorem}
\newtheorem{lemma}{Lemma}
\theoremstyle{definition}
\theoremstyle{remark}
\begin{document}

\title[]{Eigenvector Phase Retrieval: recovering Eigenvectors from the absolute value\\ of their entries}
\subjclass[2020]{05C50, 65F10, 94A12.} 
\keywords{Eigenvector phase retrieval, phase retrieval, synchronization.}
\thanks{  S.S. was partly supported by the NSF (DMS-2123224) and the Alfred P. Sloan Foundation.}

\author[]{Stefan Steinerberger}
\address{Department of Mathematics, University of Washington, Seattle, WA 98195, USA}
\email{steinerb@uw.edu}

\author[]{Hau-tieng Wu}
\address{Department of Mathematics and Department of Statistical Science, Duke University, Durham, NC, USA}
\email{hauwu@math.duke.edu}

\begin{abstract}
We consider the eigenvalue problem $Ax = \lambda x$ where $A \in \mathbb{R}^{n \times n}$ and the eigenvalue is also real $\lambda \in \mathbb{R}$. If we are given $A$, $\lambda$ and, additionally, the absolute value of the entries of $x$ (the vector $(|x_i|)_{i=1}^n$), is there a fast way to recover $x$? In particular, can this be done quicker than computing $x$ from scratch? This may be understood as a special case of the phase retrieval problem. We present a randomized algorithm which provably converges in expectation whenever $\lambda$ is a simple eigenvalue. The problem should become easier when $|\lambda|$ is large and we discuss another algorithm for that case as well.
\end{abstract}

\maketitle

\section{Introduction  }
We discuss the following problem: given a matrix $A \in \mathbb{R}^{n \times n}$ having a real eigenvalue $\lambda \in \mathbb{R}$ with eigenvector $Ax = \lambda x$, is it possible to `quickly' recover the eigenvector $x \in \mathbb{R}^n$ from knowing the absolute values of all its entries, i.e. $|x_i|_{i=1}^n$? We assume throughout the paper that both $A \in \mathbb{R}^{n \times n}$ and $\lambda \in \mathbb{R}$ are known and that the eigenvalue $\lambda$ is simple which makes the associated eigenvector $x$ unique (up to sign): the problem is to recover the sign vector $\varepsilon = (\varepsilon_i)_{i=1}^{n}$ defined via the equation $x_i = \varepsilon_i |x_i|$. Naturally, given $A$ and $\lambda$, we can find the eigenvector $x$ by look for a vector in the nullspace
$$0 \neq x \in \mbox{ker} (A - \lambda \cdot \mbox{Id}_{n \times n}).$$
Can one do better by using knowledge about the size of the entries?
\begin{quote}
\textbf{Problem.} When computing an eigenvector $Ax = \lambda x$, does it help to know the magnitude $|x_i|_{i=1}^{n}$ of all $n$ entries of the eigenvector in advance?  If yes, how much does it help?
\end{quote}
It is clear that there are cases when this helps: if, for example, we already know that $|x_i| = 0$ for many indices $i$, then we can simply look for an eigenvector of the induced submatrix of $A$. This is clearly an easier problem because the sub-matrix is going to be smaller. Or, if one entry $|x_i|$ is much larger than all the other entries, then this should also be useful in recovering the eigenvector. The focus of this paper will be on the most general case: we thus make no particular assumptions on $|x_i|_{i=1}^n$, we will only assume w.l.o.g. that they are all nonzero. There is another natural case which we will not address in this paper: if $x \in \mathbb{C}^n$ is complex, then reconstructing $x_i$ from $|x_i|$ would require reconstructing a phase $x_i = e^{i \theta_i} |x_i|$ which is more difficult than recovering a sign $x_i = \varepsilon_i |x_i|$. We will only consider the case of real eigenvalues $\lambda$ and real matrices $A \in \mathbb{R}^{n \times n}$.

We quickly discuss some related problems: a matrix identity dating back to Jacobi \cite{jacobi} and discovered many more times (see Denton, Parke, Tao \& Zhang \cite{denton}) is as follows: using $A_j$ to denote minor of $A$ that results from removing the $j-$th row and column, the $j-$th entry of the $i-$th eigenvector satisfies
$$ |v_{i,j}|^2  \prod_{k=1 \atop k \neq i}^{n} (\lambda_i(A) - \lambda_k(A)) = \prod_{k=1}^{n-1} (\lambda_i(A) - \lambda_k(A_j)).$$
In particular, if we were really good at finding eigenvalues, we could \textit{almost} find eigenvectors at the same speed: the only missing ingredient is recovering the sign.
There are other motivations. One could look at the problem combinatorially and argue that $(|x_i|)_{i=1}^n$ are step sizes on $\mathbb{R}$ and we have to arrange for each step to go either left or right to hit a certain target -- and this has to be done simultaneously for $n$ such problems. This interpretation suggests that the problem should become easier when
 $(|x_i|)_{i=1}^n$ has a few entries that are substantially larger than others. The question is also naturally connected to other problems: the classical phase retrieval problem 
\cite{CLEbook,marchesini, candes2, sun,Mallat2015} asks to recover a vector $x \in \mathbb{R}^n$ (or $\mathbb{C}^n$) from `phaseless' measurements $\left|\left\langle a_i, z\right\rangle\right|$, where $a_i \in \mathbb{R}^n$ (or $\mathbb{C}^n$) for $1 \leq i \leq m$. Our problem is naturally related in the sense that we are trying to solve $\left|\left\langle a_i, x \right\rangle\right| =  | \lambda x_i|$, however, it is clearly a more specialized case since we have additional information about $x$. As such, it may be a useful toy model for the full phase retrieval problem. We conclude with the Komlos conjecture \cite{spe}: given $v_1, \dots, v_n \in \mathbb{R}^n$ such that $\|v_i\| \leq 1$, it is clear that there exists a universal constant $c_n$ (depending only on $n$ but not on the vectors) such that there exist signs $\varepsilon_1, \varepsilon_2, \dots, \varepsilon_n \in \left\{-1,1\right\}$ with
$$ \| \varepsilon_1 v_1 + \varepsilon_2 v_2 + \dots + \varepsilon_n v_n \| \leq c_n.$$
Random signs show that $c_n \leq \sqrt{n}$. The Komlos conjecture asks whether $c_n$ can be taken to be independent of $n$ and there are now algorithmic constructions that lead good choices of signs (see \cite{bana, ban, ban2}). Our result is of a similar flavor except that when considering the eigenvectors $v_1, \dots, v_n \in \mathbb{R}^n$ of the matrix $A - \lambda \cdot \mbox{Id}_{n\times n}$ we know a priori by assumption that there exist signs for which $\varepsilon_1 v_1 + \varepsilon_2 v_2 + \dots + \varepsilon_n v_n = 0$.

\section{ Results}
\subsection{Summary.}  We quickly outline the structure of the paper.
\begin{enumerate}
\item Algorithm 1, presented in \S 2.2, is an iterative, randomized algorithm for recovering the true sign given $A, \lambda$ and $(|x_i|)_{i=1}^n$.
\item One of the nice aspects of Algorithm 1 is that it can be analyzed. We prove that it converges in the limit at a rate depending on the singular values of an associated matrix. This result is stated in \S 2.3.
\item Algorithm 1, in practice, can be slow. \S 2.4 presents Algorithm 2 which is very simple and often faster. Algorithm 2 will not work for small eigenvalues $|\lambda| \sim 0$, however, it is often remarkably effective for $|\lambda|$ large. We explain the underlying reason in \S 4.4.
\item All these points are numerically illustrated in \S 3.
\item The proof of the main result is given in \S 4.
\end{enumerate}

One interesting
aspect of both Algorithm 1 and Algorithm 2 is that they tends to quickly produce estimates for the true sign $(\varepsilon_i)_{i=1}^n$ of which more than $51\%$ are correct. This suggests a
secondary question (which we do not pursue in this paper): is it significantly easier to recover a nontrivial fraction of the true signs (say, $51\%$) than it is to recover all of them? One would perhaps assume that recovering $51\%$ is the difficult part. It would be interesting to make this notion precise.

\subsection{Algorithm 1.}
This section introduces Algorithm 1. Note that whenever $|x_i| = 0$, then $x_i = 0$ and we can remove these coordinates from consideration. This allows us to assume without loss of generality that $x_i \neq 0$ for all $1 \leq i \leq n$.
Our initial reduction will be to make the ansatz $x_i = \varepsilon_i |x_i|$. 
Then, for all $1 \leq i \leq n$,
$$ \sum_{j=1}^{n} a_{ij} \varepsilon_j |x_j| = \lambda \varepsilon_i |x_i|$$
and thus, for all $1\leq i \leq n$,
$$ \sum_{j=1}^{n} a_{ij}  \frac{|x_j|}{|x_i|} \varepsilon_j = \lambda \varepsilon_i$$
This means that the sign vector $\varepsilon \in \left\{-1,1\right\}^n$ is an eigenvector of eigenvalue $\lambda$ of the matrix $B \in \mathbb{R}^{n \times n}$, where
$$ B_{ij} = a_{ij} \frac{|x_j|}{|x_i|}.$$
That is, $B=D^{-1}AD$, where $D\in \mathbb{R}^{n\times n}$ is a diagonal matrix with $D_{ii}=|x_i|$.
The true sign vector $\varepsilon \in \left\{-1,1\right\}^n$ is a non-trivial solution of the linear system
$$ (B- \lambda \cdot \mbox{Id}_{n \times n}) y = 0$$
and we will assume that this solution is unique.
We introduce the matrix 
$$C = B- \lambda \cdot \mbox{Id}_{n \times n} \in \mathbb{R}^{n \times n}$$ and use $c_1, \dots, c_n \in \mathbb{R}^{n \times n}$ to denote its $n$ rows. One way of interpreting $C$ is that it normalizes the matrix $B$ so that each sign $x_i = \varepsilon_i |x_i|$ has the same amount of weight independently of the size of $|x_i|$.
\begin{quote}
\textbf{Algorithm 1.} Start with an arbitrary initial guess $0 \neq y_0 \in \mathbb{R}^n$. 
\begin{enumerate}
\item Given $y_k$, generate a random number $i \in \left\{1,2,\dots,n\right\}$ where the likelihood of choosing $1 \leq j \leq n$ is given by
$$ \mathbb{P}(i = j) = \frac{\|c_j\|^2}{\|C\|_F^2}.$$
\item Set
$$ y_{k+1} = y_k - \left\langle y_k, \frac{c_i}{\|c_i\|} \right\rangle \frac{c_i}{\|c_i\|}.$$
\item When stopping after $m$ steps, use the signs of $y_m$, $(\mbox{sign}((y_m)_i))_{i=1}^n$, as an approximation for the true signs $\varepsilon$. 
\end{enumerate}
\end{quote}
We quickly discuss what to expect: it is intuitive to suspect that the problem will become more difficult if there are other eigenvectors of $A$ with a similar eigenvalue and eigenvalues that have entries with comparable size. 
 It is certainly conceivable that a matrix $A$ has, for two eigenvalues $\lambda_1 \neq \lambda_2$ two eigenvectors $Ax_1 = \lambda_1 x_1$ and $A_2 x_2 = \lambda_2 x_2$ such that, for each $1 \leq i \leq n$, we have $|(x_1)_i| = |(x_2)_i|$ for all $1 \leq i \leq n$.  So having accurate information on the eigenvalue is certainly required, since $\lambda_1$ and $\lambda_2 $ could be arbitrarily close to each other: choosing a small perturbation of the eigenvalue may lead to a different solution.
This has to be incorporated in the statement, and indeed our analysis will be phrased in terms of the spectral gap of $C$. We denote the singular values of a matrix $M\in \mathbb{R}^{n\times n}$ by $$\sigma_1(M) \geq \sigma_2(M) \geq \dots \geq \sigma_n(M).$$ Since $C$ has a nontrivial null space, $C \varepsilon = 0$, we have $\sigma_n(C) = 0$. We will assume that $\sigma_{n-1}(C) > 0$ to recover some form of uniqueness.  The other relevant quantity in our analysis is the Frobenius norm $\|C\|_F^2 = \mbox{tr}~(C C^T)$.
We can now informally state properties of the algorithm.
\begin{enumerate}
\item In a suitable sense,  $(y_m) \rightarrow \lambda \varepsilon$, where $\lambda \in \mathbb{R}$, in expectation as $m \rightarrow \infty$. The algorithm recovers a multiple of the ground truth in expectation.
\item We expect, in a sense to be made precise, that $(\mbox{sign}(y_m)_i)_{i=1}^n$ coincides with $(\varepsilon_i)_{i=1}^n$ with some nontrivial likelihood as soon as
$$ m \gtrsim   \frac{ \|C\|_F^2}{\sigma_{n-1}^2(C)} \log{n}.$$
\item It is to be expected that $(\mbox{sign}((y_m)_i))_{i=1}^n$ and $\varepsilon$ coincide in a nontrivial fraction
of entries (exceeding $50\%$) already for smaller values of $m$. 
\end{enumerate}

We note that Algorithm 1 can be interpreted as a simplified version of the Random Kaczmarz method proposed by Strohmer \& Vershynin \cite{strohmer} applied to the problem of finding the nullspace of a matrix (see Steinerberger \cite{stein}): a crucial new ingredient is that we know the solution to be an element in $\left\{-1,1\right\}^n$ which allows for an effective rounding procedures. We also refer to work of Jeong \& G\"unt\"urk \cite{jeong} and Tan \& Vershynin \cite{shuo} (see also Wei \cite{wei}) who used Random Kaczmarz for the classical phase retrieval problem which is somewhat related to our approach.

\subsection{The Theorem: Analysis of Algorithm 1.} This section describes some properties of Algorithm 1. We will prove an inequality of the following type: the number of signs that are incorrectly recovered by $y_k$ multiplied with the square of $\|y_k\|$ is, in expectation, an exponentially decaying quantity in $k$: either the number of incorrectly predicted
signs or the norm $\|y_k\|$ has to decay in $k$. This is complemented by a second bound stating that if $y_0$ is chosen uniformly at random from a sphere around the origin, then $\mathbb{E} \|y_k\|$ is bounded from below uniformly in $k$:  the exponential decay is given by the number of incorrectly predicted signs.
There is a small ambiguity: if $Ax = \lambda x$ is an eigenvector, then $-x$ is an element from the same eigenspace and can be viewed as the `same' eigenvector. Therefore, the `correct' signs $\varepsilon \in \left\{-1,1\right\}^n$ as defined via the equation $x_i = \varepsilon_i |x_i|$ are really only defined up to sign in the global sense: if $\varepsilon$ is a correct choice of signs, then so is $-\varepsilon$. We fix one of the two admissible choices, call it $\varepsilon$ and introduce the set of incorrectly recovered signs
$$ S =   \left\{1 \leq i \leq n: \mbox{sign}((y_k)_i) \neq \varepsilon_i \right\}.$$
The goal is to have the size of $S$ to be either as small as possible (if $\#S =0$, then the signs of $y_k$ and $\varepsilon$ coincide) or as large as possible (if $\#S = n$, then the signs of $y_k$ and $-\varepsilon$ coincide which would then correctly reproduce the eigenvector $-x$).
\begin{thm}[Main Result] If $\sigma_{n-1}(C) > 0$, then
  $$ \mathbb{E}\left[ \min\left\{\#S, n- \#S \right\} \cdot \|y_k\|^2 \right] \leq  n  \left(1 - \frac{\sigma_{n-1}^2(C)}{ \|C\|_F^2} \right)^k \cdot \left\| y_0 \right\|^2.$$
If $y_0$ is chosen uniformly at random from a sphere centered at the origin, then 
$$ \forall~k \in \mathbb{N} \qquad \mathbb{E}~ \|y_k\|^2 \geq \frac{\|y_0\|^2}{n}.$$ 
\end{thm}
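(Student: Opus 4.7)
The plan is to recognize Algorithm 1 as a variant of the Strohmer--Vershynin randomized Kaczmarz method applied to the homogeneous system $Cy=0$, and then to convert the resulting $L^2$-decay estimate into the two stated inequalities. Since $\sigma_{n-1}(C)>0$ and $C\varepsilon=0$, the kernel $\ker C$ is the one-dimensional line $\mathbb{R}\varepsilon$, with $\|\varepsilon\|^2=n$. Because every row $c_i$ of $C$ lies in $(\ker C)^{\perp}$, each iteration of Algorithm 1 fixes elements of $\ker C$: letting $P$ denote orthogonal projection onto $\ker C$, one has $Py_k=Py_0=\alpha\varepsilon$ for every $k$, where $\alpha:=\langle y_0,\varepsilon\rangle/n$. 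Thus the decomposition $y_k=\alpha\varepsilon+r_k$ with $r_k\in(\ker C)^{\perp}$ is preserved by the iteration, and only the residual $r_k$ moves.

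The second step is the standard one-step contraction. Because row $c_i$ is chosen with probability $\|c_i\|^2/\|C\|_F^2$, a direct computation yields
$$ \mathbb{E}\bigl[\|y_{k+1}\|^2\,\bigm|\,y_k\bigr]\;=\;\|y_k\|^2 - \frac{\|Cy_k\|^2}{\|C\|_F^2}. $$
Since $Cy_k=Cr_k$ and $r_k$ lives in the row space of $C$, the standard estimate gives $\|Cr_k\|^2\geq \sigma_{n-1}(C)^2\|r_k\|^2$. Subtracting the conserved quantity $n\alpha^2=\|Py_0\|^2$ from both sides of the one-step identity and iterating produces
$$ \mathbb{E}\|r_k\|^2 \;\leq\; \Bigl(1-\tfrac{\sigma_{n-1}(C)^2}{\|C\|_F^2}\Bigr)^k \|y_0\|^2. $$

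Next I would convert this to the sign-count statement. Entrywise $(y_k)_i=\alpha\varepsilon_i+(r_k)_i$, so if $\alpha>0$ then $\mathrm{sign}((y_k)_i)\neq \varepsilon_i$ forces $|(r_k)_i|\geq |\alpha|$; if $\alpha<0$ the analogous statement for $-\varepsilon$ holds, and this is precisely what the symmetric quantity $\min(\#S,n-\#S)$ accounts for. A Chebyshev-type count then yields $\min(\#S,n-\#S)\cdot\alpha^2\leq \|r_k\|^2$. Combined with the identity $\|y_k\|^2=n\alpha^2+\|r_k\|^2$ and the trivial bound $\min(\#S,n-\#S)\leq n$, a short case split according to whether $n\alpha^2$ is larger or smaller than $\|r_k\|^2$ gives $\min(\#S,n-\#S)\cdot\|y_k\|^2\lesssim n\|r_k\|^2$. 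Taking expectations and invoking the contraction produces the first inequality of the theorem.

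For the lower bound on $\mathbb{E}\|y_k\|^2$, Pythagoras yields the pathwise inequality $\|y_k\|^2\geq \|Py_0\|^2=\langle y_0,\varepsilon\rangle^2/n$, valid for every realization of the algorithm's internal randomness. When $y_0$ is drawn uniformly from the sphere of radius $\|y_0\|$, rotational invariance gives $\mathbb{E}\langle y_0,\varepsilon\rangle^2=\|y_0\|^2\|\varepsilon\|^2/n=\|y_0\|^2$, hence $\mathbb{E}\|y_k\|^2\geq \|y_0\|^2/n$. The principal technical nuisance is the sign-count step: when $|\alpha|$ is small the Chebyshev estimate degrades and the product $\min(\#S,n-\#S)\cdot\alpha^2\leq\|r_k\|^2$ becomes vacuous, so one must pair it with the complementary lower bound $\|y_k\|^2\geq n\alpha^2$ to close both regimes cleanly.
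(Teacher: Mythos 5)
Your proposal is correct in all essentials and follows the same backbone as the paper: the same orthogonal decomposition $y_k=\alpha\varepsilon+r_k$ with the $\varepsilon$-component conserved pathwise, the same one-step Kaczmarz contraction $\mathbb{E}\left[\|r_{k+1}\|^2\mid r_k\right]\leq\bigl(1-\sigma_{n-1}(C)^2/\|C\|_F^2\bigr)\|r_k\|^2$ via $\|Cr_k\|\geq\sigma_{n-1}(C)\|r_k\|$ for $r_k\perp\ker C$, and the identical rotational-invariance argument for the lower bound on $\mathbb{E}\|y_k\|^2$. The one place you genuinely diverge is the conversion of $\|r_k\|^2$ into a sign count. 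The paper proves a purely deterministic inequality (its Lemma 2), valid for \emph{any} $y\in\mathbb{R}^n$ and $\varepsilon\in\{-1,1\}^n$, by bounding $\langle y,\varepsilon\rangle^2$ with Cauchy--Schwarz split over $S$ and $S^c$; this yields exactly $\min\{\#S,n-\#S\}\cdot\|y\|^2\leq n\,\|r\|^2$ and makes no reference to the decomposition $y=\alpha\varepsilon+r$. Your Chebyshev-style count ($|(r_k)_i|\geq|\alpha|$ on every miscounted coordinate, hence $\min\{\#S,n-\#S\}\cdot\alpha^2\leq\|r_k\|^2$) is valid, but after recombining with $\|y_k\|^2=n\alpha^2+\|r_k\|^2$ and $\min\{\#S,n-\#S\}\leq n/2$ it delivers $\min\{\#S,n-\#S\}\cdot\|y_k\|^2\leq\tfrac{3n}{2}\|r_k\|^2$ rather than $n\|r_k\|^2$; your own ``$\lesssim$'' flags this. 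So your route proves the theorem with a slightly worse absolute constant ($\tfrac{3}{2}n$ or $2n$ in place of $n$), which is immaterial for the convergence-rate conclusion but falls just short of the literal statement; the paper's Lemma 2 is the cleaner tool here and is worth adopting. Everything else, including the treatment of the $\alpha<0$ case via $n-\#S$ and the handling of the degenerate small-$|\alpha|$ regime, is sound.
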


This result shows that we can expect complete recovery for a range of
$$ k \gtrsim \frac{ \|C\|_F^2}{\sigma_{n-1}^2(C)} \log{n}.$$
This may at first glance be too good to be true since the dimension only appears logarithmically. However, recalling that
$$ \sigma_{n-1}^2(C) \leq \frac{1}{n-1} \sum_{k=1}^{n-1} \sigma_k(C)^2 \leq \frac{1}{n-1} \|C\|_F^2,$$
we see that we require, independently of $C$, at least $k \gtrsim n \log{n}$ iteration steps.

\subsection{Algorithm 2.} We now present Algorithm 2. We follow the same type of reduction as in Algorithm 1 and arrive at the problem of finding
a vector $(\varepsilon_i)_{i=1}^{n} \in \left\{-1,1\right\}^n$ in the nullspace of a given matrix
$$C = B- \lambda \cdot \mbox{Id}_{n \times n} \in \mathbb{R}^{n \times n}.$$ 
\begin{quote}
\textbf{Algorithm 2.} Start with an arbitrary initial guess $ \varepsilon_0 \in \left\{-1,1\right\}^n$. 
\begin{enumerate}
\item Given $\varepsilon_k$, compute the vector $v = C \varepsilon_k$.
\item Generate a random number $i \in \left\{1,2,\dots,n\right\}$ where the likelihood of choosing $1 \leq j \leq n$ is given by
$$ \mathbb{P}(i = j) = \frac{|v_j|^2}{\|v\|_{}^2}.$$
\item Define $\varepsilon_{k+1}$ by taking $\varepsilon_k$ and flipping its $i'$th entry, i.e.
$$ (\varepsilon_{k+1})_{i} = -(\varepsilon_k)_i$$
while keeping all other entries the same.
\end{enumerate}
\end{quote}

\section{Numerical Examples}
The purpose of this section is to give numerical examples that illustrate the performance of the algorithms on highly structured and highly random matrices. 

\subsection{Hadamard matrices.} We start with an explicit example: pick $A$ to be a $256 \times 256$ Hadamard matrix
and then set $A_{11} = 2$. $A$ has one eigenvalue $\lambda_1 \sim 2$ and one eigenvalue $\lambda_{256} \sim -0.5$, all other eigenvalues are $\pm 1$. We try to
recover the eigenvector corresponding to the largest eigenvalue, see Fig. \ref{fig:1}.

\begin{figure}[h!]
\begin{minipage}[l]{.48\textwidth}
\begin{tikzpicture}
\node at (0,0) {\includegraphics[width = \textwidth]{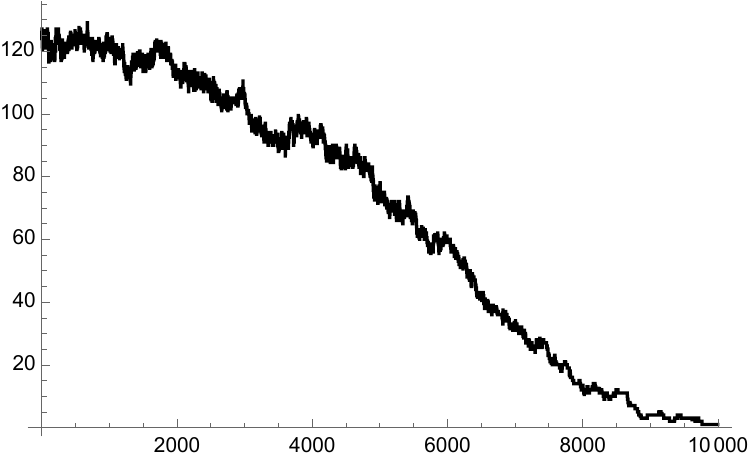}};
\end{tikzpicture}
\end{minipage} 
\begin{minipage}[r]{.48\textwidth}
\begin{tikzpicture}
\node at (0,0) {\includegraphics[width = \textwidth]{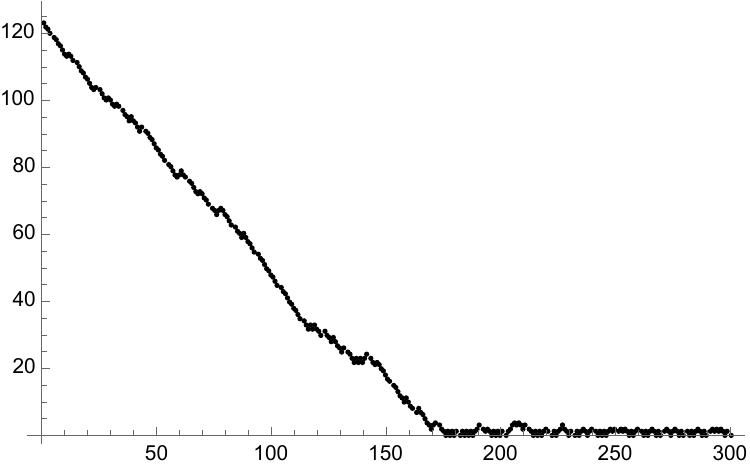}};
\end{tikzpicture}
\end{minipage} 
\caption{Number of incorrect signs ($y-$axis) vs. iteration steps ($x-$axis). Left: sample run of Algorithm 1 for $\lambda_1$. It correctly recovers all signs within a couple of thousand iteration steps. Right: Algorithm 2 recovers the ground truth almost immediately.} 
\label{fig:1}
\end{figure}
\begin{figure}[h!]
\begin{minipage}[l]{.48\textwidth}
\begin{tikzpicture}
\node at (0,0) {\includegraphics[width = \textwidth]{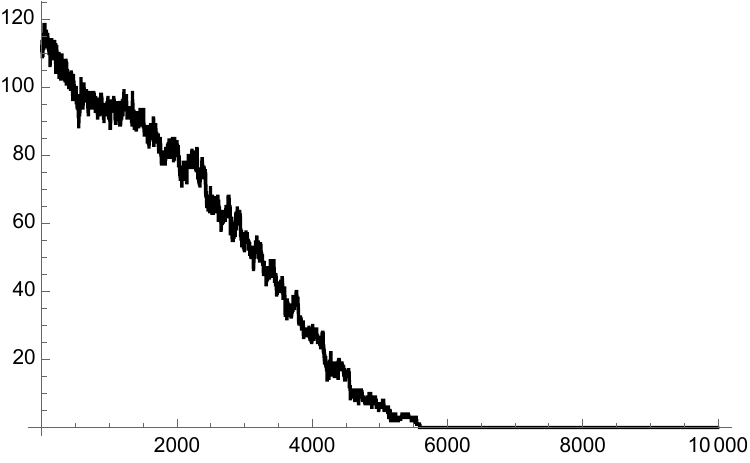}};
\end{tikzpicture}
\end{minipage} 
\begin{minipage}[r]{.48\textwidth}
\begin{tikzpicture}
\node at (0,0) {\includegraphics[width = \textwidth]{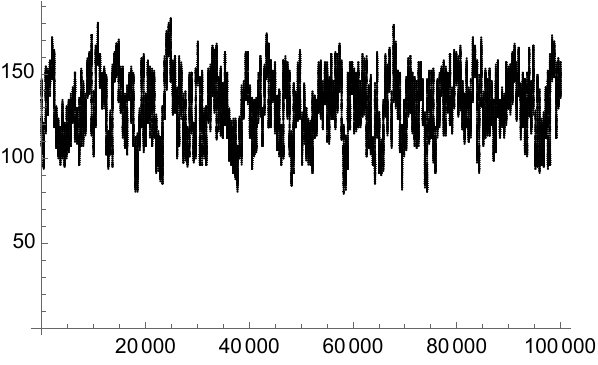}};
\end{tikzpicture}
\end{minipage} 
\caption{Number of incorrect signs ($y-$axis) vs. iteration steps ($x-$axis). Left: sample run of Algorithm 1 for $\lambda_{256}$. It correctly recovers the signs within a couple of thousand iteration steps. Right: Algorithm 2 does not seem to converge at all.}
\label{fig:2} 
\end{figure}

We will now switch gears, see Fig. \ref{fig:2}, and try to instead recover the eigenvector corresponding to the smallest (absolute) eigenvalue $\lambda_{256} \sim -0.5$. Algorithm 1 behaves in
an essentially the same way: it trends towards recovering the ground truth and achieves perfect recovery typically within a couple of thousand iterations at roughly the same rate as the eigenvector
associated to the largest eigenvalue. In contrast, Algorithm 2 does not seem to converge at all, even when run for many more iteration steps. This is explained in \S 4.4 and illustrates the point that Algorithm 2 can only be effective for eigenvalues for which $|\lambda|$ is disproportionately large compared to the other eigenvalues.

\subsection{A Random Matrix.}  Our second example (see Fig. 3) will be as follows: we choose a matrix $A \in \mathbb{R}^{100 \times 100}$ to have i.i.d. Gaussian entries $a_{ij} \sim \mathcal{N}(0,1)$. We then replace $A_{11}$ by 50 to create a nice spectral gap. Algorithm 1 works and has a nice flow evolving towards the ground truth. Since the eigenvalue is extremal, we expect Algorithm 2 to perform much better. Indeed, it recovers the ground truth within relatively few iterations.

\begin{figure}[h!]
\begin{minipage}[l]{.48\textwidth}
\begin{tikzpicture}
\node at (0,0) {\includegraphics[width = \textwidth]{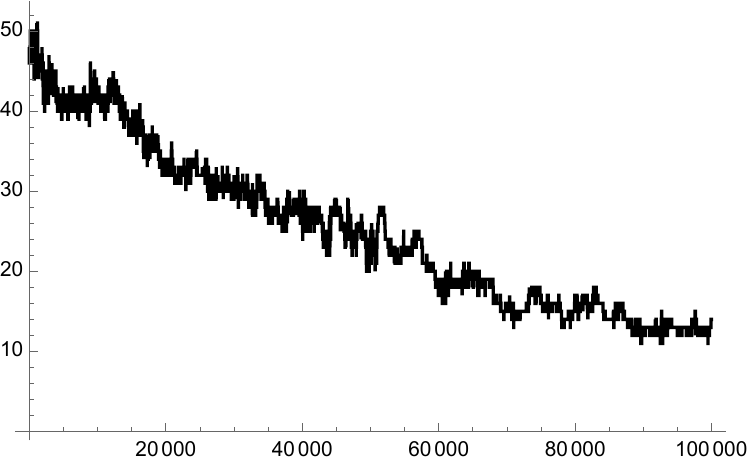}};
\end{tikzpicture}
\end{minipage} 
\begin{minipage}[r]{.48\textwidth}
\begin{tikzpicture}
\node at (0,0) {\includegraphics[width = \textwidth]{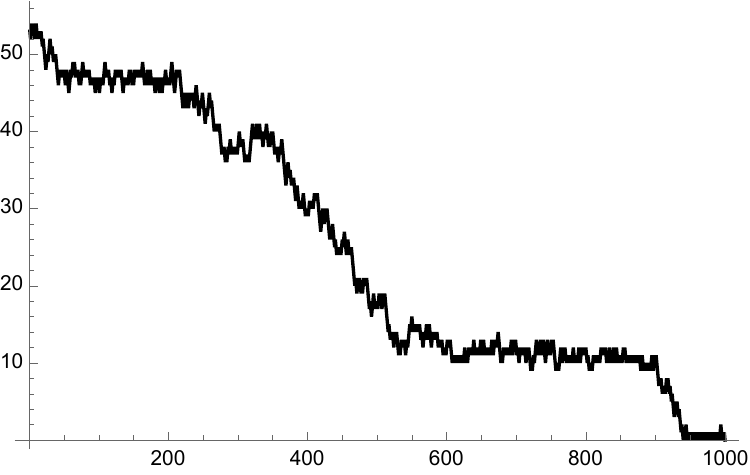}};
\end{tikzpicture}
\end{minipage} 
\caption{Left: a sample run of Algorithm 1 on a random Gaussian matrix. Right: Algorithm 2 on the same matrix.} 
\end{figure}

\section{Proof}
 We start with an outline of the structure of the argument.
We use $\varepsilon \in \left\{-1,1\right\}^n$ to denote the true solution and will analyze the algorithm with respect to the orthogonal decomposition
$$ y_k = \left\langle y_k, \frac{\varepsilon}{\|\varepsilon\|}\right\rangle \frac{\varepsilon}{\|\varepsilon\|} + \left(y_k -  \left\langle y_k, \frac{\varepsilon}{\|\varepsilon\|}\right\rangle \frac{\varepsilon}{\|\varepsilon\|}\right).$$
Note that the first term is merely the orthogonal projection onto the one-dimensional subspace spanned by $\varepsilon$, the second term is everything else.
We will show (Lemma 1) that the first term fluctuates a little in $k$ but preserves its value in expectation. The second term, on the other hand, decays exponentially
in expectation. We will then argue (Lemma 2) that the size of the second term yields an upper bound on the number of entries of $y_k$ whose sign disagrees with
the corresponding sign of $\varepsilon$. These two estimates then imply the Theorem.

\subsection{ Lemma 1: exponential shrinking.}
\begin{lemma} For all $k \in \mathbb{N}$,
$$ \mathbb{E} \left\langle y_k, \varepsilon \right\rangle = \left\langle y_0, \varepsilon \right\rangle.$$
The orthogonal projection of $y_k$ in directions orthogonal to $\varepsilon$ satisfies
$$ \mathbb{E}~\left\|y_k - \left\langle y_k, \frac{\varepsilon}{\|\varepsilon\|} \right\rangle \frac{\varepsilon}{\|\varepsilon\|} \right\|^2 \leq \left(1 - \frac{\sigma_{n-1}^2(C)}{ \|C\|_F^2} \right)^k \cdot \left\|y_0 - \left\langle y_0, \frac{\varepsilon}{\|\varepsilon\|} \right\rangle \frac{\varepsilon}{\|\varepsilon\|} \right\|^2.$$
\end{lemma}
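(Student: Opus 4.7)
\subsection*{Proof proposal for Lemma 1}

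The starting observation is that $C\varepsilon = 0$ means every row $c_i$ of $C$ is orthogonal to $\varepsilon$. Since the $k$-th step of Algorithm 1 is simply the orthogonal projection $y_{k+1} = P_{c_i^\perp} y_k$ onto the hyperplane through the origin normal to $c_i$, and $\varepsilon \in c_i^\perp$, we have $P_{c_i^\perp} \varepsilon = \varepsilon$. By self-adjointness of the projection,
$$\langle y_{k+1}, \varepsilon\rangle = \langle P_{c_i^\perp} y_k, \varepsilon\rangle = \langle y_k, P_{c_i^\perp} \varepsilon\rangle = \langle y_k, \varepsilon\rangle,$$
deterministically, for any realization of the random index $i$. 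Iterating gives the first claim (with equality in expectation as a trivial consequence).

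Next I will decompose $y_k = \alpha \tfrac{\varepsilon}{\|\varepsilon\|} + z_k$ where $\alpha = \langle y_0, \varepsilon/\|\varepsilon\|\rangle$ is fixed by the first step and $z_k \in \varepsilon^\perp$. Since $P_{c_i^\perp}\varepsilon = \varepsilon$, the projection acts trivially on the $\varepsilon$-component, so $z_{k+1} = P_{c_i^\perp} z_k$, and in particular $z_k$ stays in $\varepsilon^\perp$ for all $k$. A one-line computation using the identity $\|P_{c_i^\perp}z\|^2 = \|z\|^2 - \langle z, c_i\rangle^2/\|c_i\|^2$ together with the sampling weights $\mathbb{P}(i=j) = \|c_j\|^2/\|C\|_F^2$ gives
$$\mathbb{E}\bigl[\|z_{k+1}\|^2 \,\big|\, z_k\bigr] = \|z_k\|^2 - \frac{1}{\|C\|_F^2}\sum_{j=1}^n \langle z_k, c_j\rangle^2 = \|z_k\|^2 - \frac{\|C z_k\|^2}{\|C\|_F^2}.$$

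The key inequality is then the spectral gap bound on $\varepsilon^\perp$: because $\sigma_{n-1}(C) > 0$ the kernel of $C$ is exactly $\mathrm{span}(\varepsilon)$, and an SVD argument shows that for every $z \in (\ker C)^\perp = \varepsilon^\perp$ one has $\|Cz\|^2 \geq \sigma_{n-1}(C)^2 \|z\|^2$. Substituting this into the conditional expectation above yields
$$\mathbb{E}\bigl[\|z_{k+1}\|^2 \,\big|\, z_k\bigr] \leq \left(1 - \frac{\sigma_{n-1}(C)^2}{\|C\|_F^2}\right) \|z_k\|^2,$$
and the second claim follows by the tower property and induction on $k$, starting from $z_0 = y_0 - \langle y_0, \varepsilon/\|\varepsilon\|\rangle \varepsilon/\|\varepsilon\|$.

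I do not expect any serious obstacle: the whole argument turns on the single observation that $\varepsilon$ is simultaneously orthogonal to every row of $C$, which decouples the evolution of the $\varepsilon$-component (invariant) from that of its orthogonal complement (on which $C$ has smallest singular value $\sigma_{n-1}(C)$ and the standard randomized Kaczmarz contraction takes over). The only mildly delicate point is remembering that the Kaczmarz-type rate is controlled by the smallest \emph{nonzero} singular value of $C$ rather than by $\sigma_n(C) = 0$, and this is exactly what confining the dynamics to $\varepsilon^\perp$ buys us.
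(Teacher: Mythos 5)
Your proposal is correct and follows essentially the same route as the paper: the same orthogonal decomposition of $y_k$ into its $\varepsilon$-component (which is preserved deterministically because every row $c_i$ satisfies $\langle c_i, \varepsilon\rangle = 0$) and a remainder in $\varepsilon^\perp$, the same conditional-expectation computation giving the factor $1 - \|Cz_k\|^2/(\|C\|_F^2\|z_k\|^2)$, and the same lower bound $\|Cz\|^2 \geq \sigma_{n-1}(C)^2\|z\|^2$ on $\varepsilon^\perp$. The only cosmetic difference is that you phrase the invariance of $\langle y_k,\varepsilon\rangle$ via self-adjointness of the projection, where the paper expands the update directly.
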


\begin{proof} 
We write $y_k$ as the sum of the projection onto $\varepsilon$ and the rest. Therefore, for each $k$, the decomposition $y_k = \pi_k + r_k$, where 
  $$ \pi_k = \left\langle y_k, \frac{\varepsilon}{\|\varepsilon\|} \right\rangle \frac{\varepsilon}{\|\varepsilon\|}$$
  and 
  $$ r_k = y_k - \pi_k.$$
  The Pythagorean theorem tells us that 
  $$ \|y_k\|^2 = \|\pi_k\|^2 + \|r_k\|^2.$$
  Our desired inequality can be phrased as an inequality for $\|r_k\|^2$.
We now observe a deterministic identity: since $C \varepsilon = 0$, we have $\left\langle c_i, \varepsilon\right\rangle = 0$ for all $1 \leq i \leq n$ and thus, independently of which $1 \leq j \leq n$ is chosen, that
\begin{align*}
 y_{k+1} &= y_k - \left\langle y_k, \frac{c_j}{\|c_j\|} \right\rangle \frac{c_j}{\|c_j\|} \\
 &= \pi_k + r_k -  \left\langle \pi_k + r_k, \frac{c_j}{\|c_j\|} \right\rangle \frac{c_j}{\|c_j\|} \\
 &= \pi_k + r_k - \left\langle  r_k, \frac{c_j}{\|c_j\|} \right\rangle \frac{c_j}{\|c_j\|}.
 \end{align*}
From this, we infer that $\pi_{k+1} = \pi_k$ whereas $r_k$ undergoes a random evolution and
$$ r_{k+1} =  r_k - \left\langle  r_k, \frac{c_j}{\|c_j\|} \right\rangle \frac{c_j}{\|c_j\|} \qquad \mbox{with likelihood} \quad \frac{\|c_j\|^2}{\|C\|_F^2}.$$
We can now compute, conditional on $r_k$, that
\begin{align*}
 \mathbb{E}~  \|r_{k+1}\|^2 &=  \sum_{j=1}^{n} \frac{\|c_j\|^2}{\|C\|_F^2}   \left\| r_k - \left\langle r_k, \frac{c_j}{\|c_j\|} \right\rangle \frac{c_j}{\|c_j\|} \right\|^2 \\
 &= \|r_k\|^2 -  \frac{1}{\|C\|_F^2} \sum_{j=1}^{n} \left\langle r_k, c_j \right\rangle^2 \\
 &=  \|r_k\|^2 -  \frac{\| C r_k\|^2}{\|C\|_F^2}.
 \end{align*}
Since $r_k$ is orthogonal to $\varepsilon$ (which is the unique vector associated to the smallest singular value), we have $\| C r_k\|^2 \geq \sigma_{n-1}^2(C) \|r_k\|^2$ and therefore
$$ \mathbb{E}~ \|r_{k+1}\|^2 \leq \left(1 - \frac{\sigma_{n-1}^2(C)}{\|C\|_F^2}\right) \|r_k\|^2.$$
Iterating the inequality gives the desired result.
\end{proof}

\subsection{A simple inequality.}
We will also use an elementary inequality for vectors to show that the second term in our orthogonal
decomposition is directly connected to the number of incorrectly recovered signs. The inequality is independent of the algorithm and true for all vectors $y \in \mathbb{R}^n$.
\begin{lemma} For any $\varepsilon \in \left\{-1,1\right\}^n$ and any $y \in \mathbb{R}^n$, denote the number of coordinates where $y$ and $\varepsilon$ have a different sign by
$$ S = \left\{1 \leq i \leq n: \emph{sign}(y_i) \neq \varepsilon_i \right\}.$$
Then
$$  \left\| y -  \left\langle y, \frac{\varepsilon}{\|\varepsilon\|}\right\rangle \frac{\varepsilon}{\|\varepsilon\|} \right\|^2 \geq \frac{ \min\left\{ \# S, n- \# S \right\}}{n} \cdot \|y\|^2.$$
\end{lemma}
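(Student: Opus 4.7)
The plan is to convert the geometric quantity on the left into an algebraic inequality and then apply the Cauchy--Schwarz inequality to a carefully chosen coordinate subset. Since $\varepsilon \in \{-1,1\}^n$ has $\|\varepsilon\|^2 = n$, the Pythagorean identity for orthogonal projection rewrites the left-hand side as
$$\left\|y - \left\langle y, \tfrac{\varepsilon}{\|\varepsilon\|}\right\rangle \tfrac{\varepsilon}{\|\varepsilon\|}\right\|^2 = \|y\|^2 - \frac{\langle y, \varepsilon\rangle^2}{n}.$$
Using the identity $n - \min\{\#S,\, n - \#S\} = \max\{\#S,\, n - \#S\}$, the target inequality is equivalent to the Cauchy--Schwarz-type bound
$$\langle y, \varepsilon\rangle^2 \leq \max\{\#S,\, n - \#S\}\,\|y\|^2.$$

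Next I would exploit the sign structure explicitly. For each $i \notin S$ one has $y_i \varepsilon_i = |y_i|$, while for each $i \in S$ one has $y_i \varepsilon_i = -|y_i|$, so
$$\langle y, \varepsilon\rangle = \sum_{i \notin S} |y_i| - \sum_{i \in S} |y_i|.$$
Both sums are nonnegative, and hence $|\langle y, \varepsilon\rangle|$ is at most the larger of them. This is the key structural observation: rather than the crude estimate $|A - B| \leq A + B$, the cancellation between agreeing and disagreeing indices retains only the dominant half of the inner product, which is exactly what lets the $\max\{\#S, n-\#S\}$ arise.

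Finally, I would apply the Cauchy--Schwarz inequality to whichever of the two sums is larger: $\bigl(\sum_{i \notin S} |y_i|\bigr)^2 \leq (n - \#S) \sum_{i \notin S} y_i^2 \leq (n - \#S)\|y\|^2$, and analogously $\bigl(\sum_{i \in S} |y_i|\bigr)^2 \leq (\#S)\|y\|^2$. In either case,
$$\langle y, \varepsilon\rangle^2 \leq \max\{n - \#S,\, \#S\}\,\|y\|^2,$$
which, substituted back into the Pythagorean identity, yields the stated bound. The argument is essentially a refinement of Cauchy--Schwarz and I do not anticipate a substantive obstacle; the only delicate point is recognizing that the $\min$/$\max$ structure emerges precisely because the inequality must be invariant under the symmetry $\varepsilon \mapsto -\varepsilon$ (which swaps $S$ with its complement).
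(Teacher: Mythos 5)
Your proposal is correct and follows essentially the same route as the paper: the Pythagorean identity reduces the claim to bounding $\langle y,\varepsilon\rangle^2$ by $\max\{\#S,\,n-\#S\}\cdot\|y\|^2$, the inner product is split into the nonnegative sums over $S$ and its complement, and Cauchy--Schwarz is applied to whichever dominates. The only cosmetic difference is that the paper writes the split as $\sum_i |y_i|(1-2\cdot 1_{i\in S})$ and bounds it from above and below separately, which is the same estimate phrased slightly differently.
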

\begin{proof}
We start with the orthogonal decomposition
$$ y = \left\langle y, \frac{\varepsilon}{\|\varepsilon\|}\right\rangle \frac{\varepsilon}{\|\varepsilon\|} + \left(y -  \left\langle y, \frac{\varepsilon}{\|\varepsilon\|}\right\rangle \frac{\varepsilon}{\|\varepsilon\|}\right).$$
The Pythagorean theorem implies that
$$ \|y\|^2 =  \left\langle y, \frac{\varepsilon}{\|\varepsilon\|}\right\rangle^2 + \left\| y -  \left\langle y, \frac{\varepsilon}{\|\varepsilon\|}\right\rangle \frac{\varepsilon}{\|\varepsilon\|} \right\|^2.$$
We now 
observe
\begin{align*}
\left\langle y, \frac{\varepsilon}{\|\varepsilon\|}\right\rangle^2 &= \frac{1}{n} \left\langle y, \varepsilon\right\rangle^2 = \frac{1}{n} \left( \sum_{i=1}^{n} y_i \varepsilon_i \right)^2 = \frac{1}{n} \left( \sum_{ i \in S}^{} y_i \varepsilon_i  +  \sum_{ i \notin S}^{} y_i \varepsilon_i \right)^2 \\
&=  \frac{1}{n} \left( \sum_{i \in S}^{} -|y_i| +  \sum_{ i \notin S}^{} |y_i | \right)^2 = \frac{1}{n} \left( \sum_{i =1}^{n} |y_i| \left(1 - 2 \cdot 1_{i \in S}\right) \right)^2.
\end{align*}
This sum is being squared, therefore we have to bound it from above and below. Bounding it from above is easy: certainly the sum becomes larger if $y$ vanishes in all coordinates indexed by the set $S$ and the Cauchy-Schwarz inequality implies
$$ \sum_{i =1}^{n} |y_i| \left(1 - 2 \cdot 1_{i \in S}\right) \leq \sum_{i  \in S^c}^{} |y_i| \leq  \sqrt{\# S^c} \cdot  \|y\|.$$
Minimizing the sum is similar, the sum becomes smaller if $y$ vanishes in all the coordinates indexed by $S^c$ and, again via Cauchy-Schwarz,
$$  \sum_{i =1}^{n} |y_i| \left(1 - 2 \cdot 1_{i \in S}\right) \geq -\sum_{i  \in S}^{} |y_i| \geq - \sqrt{\# S} \cdot  \|y\|.$$
Therefore,
$$  \frac{1}{n} \left( \sum_{i =1}^{n} |y_i| \left(1 - 2 \cdot 1_{i \in S}\right) \right)^2 \leq \frac{\max\left\{\# S, \# S^c \right\}}{n} \|y\|^2$$
and thus, since $\# S + \# S^c = n$,
$$  \left\| y -  \left\langle y, \frac{\varepsilon}{\|\varepsilon\|}\right\rangle \frac{\varepsilon}{\|\varepsilon\|} \right\|^2 \geq \frac{\min\left\{\# S, n - \# S \right\}}{n} \|y\|^2.$$
\end{proof}

\subsection{Proof of the Theorem}
\begin{proof}

 Lemma 2 implies that for the set 
 $$S=   \left\{1 \leq i \leq n: \mbox{sign}((y_k)_i) \neq \varepsilon_i \right\},$$
  we have
  $$   \frac{ \min\left\{ \# S, n- \# S \right\}}{n} \cdot \|y_k\|^2 \leq  \left\| y_k -  \left\langle y_k, \frac{\varepsilon}{\|\varepsilon\|}\right\rangle \frac{\varepsilon}{\|\varepsilon\|} \right\|^2$$
Taking an expectation over both sides and using Lemma 1 leads to
\begin{align*}
\mathbb{E}  \frac{ \min\left\{ \# S, n- \# S \right\}}{n} \cdot \|y_k\|^2 &\leq \mathbb{E} \left\| y_k -  \left\langle y_k, \frac{\varepsilon}{\|\varepsilon\|}\right\rangle \frac{\varepsilon}{\|\varepsilon\|} \right\|^2 \\
&\leq  \left(1 - \frac{\sigma_{n-1}^2(C)}{ \|C\|_F^2} \right)^k \cdot \left\|y_0 - \left\langle y_0, \frac{\varepsilon}{\|\varepsilon\|} \right\rangle \frac{\varepsilon}{\|\varepsilon\|} \right\|^2 \\
&\leq  \left(1 - \frac{\sigma_{n-1}^2(C)}{ \|C\|_F^2} \right)^k \cdot \left\|y_0 \right\|^2.
\end{align*}

It remains to prove the lower bound on $\|y_k\|$. Naturally, for any $y_k \in \mathbb{R}^n$, we have (using the proof of Lemma 1)
$$ \|y_k\|^2 \geq  \left\langle y_k, \frac{\varepsilon}{\|\varepsilon\|} \right\rangle^2 =  \frac{1}{n}  \left\langle y_k, \varepsilon \right\rangle^2 = \frac{1}{n} \left\langle y_0, \varepsilon\right\rangle^2.$$
 \end{proof}

\subsection{A Look at Algorithm 2.} We conclude by discussing Algorithm 2. Our observations will be heuristic and apply to generic matrices but will not be rigorous at the level of being applicable to all matrices. Algorithm 2 proceeds by analyzing
$$ C \varepsilon_k = B \varepsilon_k - \lambda \varepsilon_k$$
and hoping that the size of the entries are a reasonable indicator of `how incorrect' the corresponding entry is. Larger entries are more likely to be randomly chosen and flipped.
Let us consider the size of the $i-$th entry: its size is given by 
$$(C \varepsilon)_i =  \sum_{j=1}^{n} a_{ij}  \frac{|x_j|}{|x_i|} \varepsilon_j - \lambda \varepsilon_i$$
If we now assume most entries to be roughly correct then $(C \varepsilon)_i$ will be roughly $\sim 0$ if $\varepsilon_i$ has the correct sign and roughly $\sim  \pm 2\lambda$ if the sign $\varepsilon_i$ was chosen incorrectly. This suggests that flipping $\varepsilon_i$ with likelihood proportional to $(C \varepsilon)_i$ might be a good idea when $\lambda$ is large: things will surely get harder when $\lambda$ is small.
We conclude by estimating the scaling of the relevant quantities when $A$ is a random matrix. Let us assume, for simplicity, that the entries of the eigenvector $|x_i| \sim |x_j|$ are roughly comparable (up to some constant factors). Then, for randomly chosen signs $\varepsilon \in \left\{-1,1\right\}^n$, we expect
$$  \sum_{j=1}^{n} a_{ij}  \frac{|x_j|}{|x_i|} \varepsilon_j  \quad \mbox{ to follow roughly }  \quad \mathcal{N}\left(0, \sigma_i^2\right),$$
where the variance is given by
$$ \sigma_i^2 =  \sum_{j=1}^{n} a_{ij} ^2 \frac{|x_j|^2}{|x_i|^2} \sim  \sum_{j=1}^{n} a_{ij}^2.$$
Assuming that $A$ is a random matrix, all the rows have comparable norm and we expect that
$$ \sigma_i^2  \sim  \sum_{j=1}^{n} a_{ij}^2 \sim \frac{1}{n} \sum_{i,j=1}^{n} a_{ij}^2 = \frac{1}{n} \|A\|_F^2 = \frac{1}{n} \sum_{i=1}^{n} \sigma_i(A)^2 \leq \|A\|^2,$$
where the last inequality is typically far from sharp (and only sharp if all singular values are identical). This means that as soon as $\lambda$ is an eigenvalue close to the operator norm $|\lambda| \sim \| A\|$, it will typically dominate expressions of the type
$$(C \varepsilon)_i =  \sum_{j=1}^{n} a_{ij}  \frac{|x_j|}{|x_i|} \varepsilon_j - \lambda \varepsilon_i$$
and Algorithm 2 will be effective in recovering the sign of the eigenvector (see \S 3).

\end{document}